\newcommand{\R}{\ensuremath{\mathbb{R}}}
\newcommand{\C}{\ensuremath{\mathbb{C}}}
\newcommand{\E}{\ensuremath{\mathbb{E}}}
\newcommand{\pr}[2]{\ensuremath{\langle {#1},{#2}\rangle}}
\newcommand{\norm}[1]{\ensuremath{\|#1\|}}
\newtheorem{theorem}{Theorem}[section]
\newtheorem{cor}[theorem]{Corollary}
\newtheorem{obs}[theorem]{Remark}
\def\today{{\number\day\space
 \ifcase\month\or
  January\or February\or March\or April\or May\or June\or
  July\or August\or September\or October\or November\or December\fi
 \space\number\year}}
\begin{document}

\title{A Note on Functional Averages over Gaussian Ensembles}
\author{Gabriel H. Tucci and Maria V. Vega}
\address{G. H. Tucci
  is with Bell Laboratories,
  Alcatel--Lucent, 600 Mountain Ave, Murray Hill, NJ 07974.
  E-mail: gabriel.tucci@alcatel-lucent.com}

\email{gabriel.tucci@alcatel-lucent.com}
  
\address{M. V. Vega
	is with William Paterson University,
	Mathematics Department,
	300 Pompton Rd  Wayne, NJ 07470
	E-mail: vegaveglio@wpunj.edu}

\email{vegavegliom@wpunj.edu}

\noindent 
\begin{abstract}
In this work we find a new formula for matrix averages over the Gaussian ensemble. Let ${\bf H}$ be an $n\times n$ Gaussian random matrix with complex, independent, and identically distributed entries of zero mean and unit variance. Given an $n\times n$ positive definite matrix ${\bf A}$, and a continuous function $f:\R^{+}\to\R$ such that $\int_{0}^{\infty}{e^{-\alpha t}|f(t)|^2\,dt}<\infty$ for every $\alpha>0$, we find a new formula for the expectation $\E[\mathrm{Tr}(f({\bf HAH^{*}}))]$. Taking $f(x)=\log(1+x)$ gives another formula for the capacity of the MIMO communication channel, and taking $f(x)=(1+x)^{-1}$ gives the MMSE achieved by a linear receiver.
\end{abstract}

\maketitle

\keywords{Random Matrices, Limiting Distribution, Gaussian Averages, MIMO Capacity, MMSE}


\section{Introduction}%

\noindent Random matrix theory was introduced to the theoretical physics community by Wigner in his work on nuclear physics in the 1950s (\cite{Wigner1, Wigner2}). Since that time, the subject is an important and active research area in mathematics and it finds applications in fields as diverse as the Riemann conjecture, physics, chaotic systems, multivariate statistics, wireless communications, signal processing, compressed sensing and information theory. In the last decades, a considerable amount of work has emerged in the communications and information theory on the fundamental limits of communication channels that makes use of results in random matrix theory \cite{Mehta, Ver, Guionnet}. For this reason, computing averages over certain matrix ensembles becomes extremely important in many situations. To be more specific, consider the well known case of the single user MIMO channel with multiple transmit and receive antennas. Denoting the number of transmitting antennas by $t$ and the number of receiving antennas by $r$, the channel model is 
$$
{\bf y}={\bf Hu}+{\bf n},
$$
where ${\bf u}\in\C^{t}$ is the transmitted vector, ${\bf y}\in\C^{r}$ is the received vector, ${\bf H}$ is a $r\times t$ complex matrix and ${\bf n}$ is the zero mean complex Gaussian vector with independent, equal variance entries. We assume that $\E({\bf nn}^{*})={\bf I}_{r}$, where $(\cdot)^{*}$ denotes the complex conjugate transpose. It is reasonable to put a power constraint 
$$
\E({\bf u^{*}u})=\E(\mathrm{Tr}({\bf uu^{*}}))\leq P,
$$ 
where $P$ is the total transmitted power. The signal to noise ratio, denoted by $\mathrm{snr}$, is defined as the quotient of the signal power and the noise power and in this case is equal to $P/r$.

\vspace{0.1cm}
\noindent Recall that if ${\bf A}$ is an $n\times n$ Hermitian matrix then there exists ${\bf U}$ unitary and ${\bf D}=\mathrm{diag}(d_{1},\ldots,d_{n})$ such that ${\bf A}={\bf UDU^{*}}$. Given a continuous function $f$ we define $f({\bf A})$ as 
$$
f({\bf A})={\bf U}\mathrm{diag}(f(d_1),\ldots,f(d_{n})){\bf U^{*}}.
$$

\par Naturally, the simplest example is the one where ${\bf H}$ has independent and identically distributed (i.i.d.) Gaussian entries, which constitutes the canonical model for the single user narrow band MIMO channel. It is known that the capacity of this channel is achieved when ${\bf u}$ is a complex Gaussian zero mean and covariance $\mathrm{snr}\,{\bf I}_{t}$ vector (see for instance \cite{Emre, Ver}). For the fast fading channel, assuming statistical channel state information at the transmitter, the ergodic capacity is given by 
\begin{equation}
\E\Big[\log\det({\bf I}_{r}+\mathrm{snr}{\bf H}{\bf H}^{*})\Big]=\E\Big[ \mathrm{Tr}\log({\bf I}_{r}+\mathrm{snr}{\bf H}{\bf H}^{*})\Big],
\end{equation}
where in the last equality we use the fact that $\mathrm{Tr}\log(\cdot)=\log\det(\cdot)$. We refer the reader to \cite{Emre} or \cite{Ver} for more details on this.

\vspace{0.1cm}
\noindent  Another important performance measure is the minimum mean square error (MMSE) achieved by a linear receiver, which determines the maximum achievable output signal to interference and noise ratio (SINR). For an input vector ${\bf x}$ with i.i.d. entries of zero mean and unit variance the MSE at the output of the MMSE receiver is given by
\begin{equation}
\min_{{\bf M}\in\C^{t\times r}}\E\Big[\norm{{\bf x-My}}^2\Big] = \E\Big[\mathrm{Tr}\Big({\bf I}_{t}+\mathrm{snr}{\bf H}^{*}{\bf H}\Big)^{-1}\Big],
\end{equation}
where the expectation on the left hand side is over both the vectors ${\bf x}$ and the random matrices ${\bf H}$, while the right hand side is over ${\bf H}$ only. We refer to \cite{Ver} for more details on this.

\vspace{0.1cm}
\noindent There is a big literature and history of work on averages over Gaussian ensembles; see for instance \cite{Emre, Simon, Ver, Alvo, marzetta, Guionnet, Mehta, McKay, Sil1, Debbah, Marchenko, Edelman} and references therein. In \cite{Emre} the capacity of the Gaussian channel was computed as an improper integral. This integral is difficult to compute and asymptotic and simulation results are provided. In \cite{Edelman, Debbah,Alvo, Sil1, Sil2} several asymptotic results for large complex Gaussian random matrices are studied in connection with wireless communication and information theory. In \cite{Alvo} many aspects of correlated Gaussian matrices are addressed, in particular the capacity of Rayleigh channel was computed as the number of antennas increases to infinity. The books \cite{Ver, Mehta, Guionnet} are excellent introductions to random matrix theory and their applications to physics and information theory. In \cite{McKay} the spectral eigenvalue distribution for a random infinite $d$-regular graph was computed.  

\vspace{0.1cm}
\noindent The typical approach in computing averages over random matrices is to consider the asymptotic behavior as the size of the matrix increases to infinity. In this work we contribute to this area by providing a unified framework to express the ergodic mutual information, the MSE at the output of the MMSE decoder and other types of functionals of a single user MIMO channel, when the number of transmitting and receiving antennas are equal and finite. We do not rely on asymptotic results as the number of antennas increases. The results shown in this work are new and novel to the best knowledge of the author and they were not discovered before.

\vspace{0.1cm}
\noindent In Section \ref{sec_prelim}, we present some preliminaries in Schur polynomials that are later used in this work. In Section \ref{sec_main}, we prove the main result of the paper, Theorem \ref{main}. This Theorem provides a new formula for the expectation 
\begin{equation}
\E\Big[\mathrm{Tr}\big(f({\bf HAH}^{*})\big)\Big],
\end{equation}
where ${\bf A}$ is a positive definite matrix and $f$ a continuous function such that 
$$
\int_{0}^{\infty}{e^{-\alpha t}|f(t)|^2\,dt}<\infty
$$ 
for every $\alpha>0$. Notice that, as previously stated, taking $f(x)=\log(1+x)$ gives another formula for the capacity of the MIMO communication channel, and taking $f(x)=(1+x)^{-1}$ gives the MMSE achieved by a linear receiver. We also discuss some applications and present some examples. 

\section{Schur Polynomials Preliminaries}\label{sec_prelim}

\noindent A symmetric polynomial is a polynomial $P(x_1,x_2,\ldots, x_n)$ in $n$ variables such that if any of the variables are interchanged one obtains the same polynomial. Formally, $P$ is a symmetric polynomial if for any permutation $\sigma$ of the set $\{1,2,\ldots, n\}$ one has 
$$
P(x_{\sigma(1)},x_{\sigma(2)},\ldots, x_{\sigma(n)})=P(x_1,x_2,\ldots, x_{n}).
$$
Symmetric polynomials arise naturally in the study of the relation between the roots of a polynomial in one variable and its coefficients, since the coefficients can be given by a symmetric polynomial expressions in the roots. Symmetric polynomials also form an interesting object by themselves. The resulting structures, and in particular the ring of symmetric functions, are of great importance in combinatorics and in representation theory (see for instance \cite{Fulton, Muir, Mac, Sagan} for more on details on this topic).

\vspace{0.1cm}
\noindent The Schur polynomials are certain symmetric polynomials in $n$ variables. This class of polynomials is very important in representation theory since they are the characters of irreducible representations of the general linear groups. The Schur polynomials are indexed by partitions. A partition of a positive integer $n$, also called an integer partition, is a way of writing $n$ as a sum of positive integers. Two partitions that differ only by the order of their summands are considered to be equal. Therefore, we can always represent a partition $\lambda$ of a positive integer $n$ as a non-increasing sequence of $n$ non-negative integers $d_i$ such that 
$$
\sum_{i=1}^{n}{d_i}=n \hspace{0.5cm} \text{with} \hspace{0.5cm} d_{1}\geq d_2\geq d_{3}\geq\ldots\geq d_{n}\geq 0.
$$ 
Notice that some of the $d_i$ could be zero. Integer partitions are usually represented by the so called Young's diagrams (also known as Ferrers' diagrams). A Young diagram is a finite collection of boxes, or cells, arranged in left-justified rows, with the row lengths weakly decreasing (each row has the same or shorter length than its predecessor). Listing the number of boxes on each row gives a partition $\lambda$ of a non-negative integer $n$, the total number of boxes of the diagram. The Young diagram is said to be of shape $\lambda$, and it carries the same information as that partition. For instance, below we can see the Young diagram corresponding to the partition $(5,4,1)$ of the number 10.
$$
\yng(5,4,1)
$$
Given a partition $\lambda$ of $n$
$$   
n = d_1 + d_2 + \cdots + d_n \,\,\, : \,\,\, d_1 \geq d_2 \geq \cdots \ge d_n\geq 0
$$
the following functions are alternating polynomials (in other words they change sign under any transposition of the variables):
\begin{eqnarray*}
a_{(d_1,\ldots,d_n)}(x_1, \ldots , x_n) &=& \det \left[ \begin{matrix} x_1^{d_1} & x_2^{d_1} & \dots & x_n^{d_1} \\ x_1^{d_2} & x_2^{d_2} & \dots & x_n^{d_2} \\ \vdots & \vdots & \ddots & \vdots \\ x_1^{d_n} & x_2^{d_n} & \dots & x_n^{d_n} \end{matrix} \right] \\
& = &\sum_{\sigma\in S_n}\epsilon(\sigma)x_{\sigma(1)}^{d_1}\cdots x_{\sigma(n)}^{d_n}
\end{eqnarray*}
where $S_{n}$ is the permutation group of the set $\{1,2,\ldots,n\}$. Since they are alternating, they are all divisible by the Vandermonde determinant 
$$
\Delta(x_1,\ldots,x_n)=\prod_{1 \leq j < k \leq n} (x_j-x_k).
$$ 
The Schur polynomial associated to $\lambda$ is defined as the ratio:
$$
s_{\lambda} (x_1, x_2, \dots , x_n) = \frac{ a_{(d_1+n-1, d_2+n-2, \dots , d_n+0)} (x_1, \dots , x_n)} {\Delta(x_1,\ldots,x_n)}. 
$$
This is a symmetric function because the numerator and denominator are both alternating, and a polynomial since all alternating polynomials are divisible by the Vandermonde determinant (see \cite{Fulton,Mac, Sagan} for more details here). For instance,
$$
s_{(2,1,1)} (x_1, x_2, x_3) = x_1 \, x_2 \, x_3 \, (x_1 + x_2 + x_3) 
$$
and
$$
s_{(2,2,0)} (x_1, x_2, x_3) =  x_1^2 \, x_2^2 + x_1^2 \, x_3^2 + x_2^2 \, x_3^2 + x_1^2 \, x_2 \, x_3 +  x_1 \, x_2^2 \, x_3 + x_1 \, x_2 \, x_3^2.  
$$

\noindent Another definition we need for the next Section is the so called hook length, $\mathrm{hook}(x)$, of a box $x$ in Young diagram of shape $\lambda$. This is defined as the number of boxes that are in the same row to the right of it plus those boxes in the same column below it, plus one (for the box itself). As an example, below we show the hook lengths of the partition $(5,4,1)$. The product of the hook's length of a partition is the product of the hook lengths of all the boxes in the partition. 
$$
\begin{Young}
7 & 5 & 4 & 3 & 1 \cr
5 & 3 & 2 & 1 \cr
1 \cr
\end{Young}
$$
\noindent We recommend the interested reader to consult \cite{Fulton,Mac,Sagan} for more details and examples on this topic. 

\section{Averages over Gaussian Ensembles}\label{sec_main}

\noindent Let $M_{n}$ be the set of all $n\times n$ complex matrices and ${\bf U}_{n}$ the set of $n\times n$ unitary complex matrices. Let $d{\bf H}$ be the Lebesgue measure on $M_{n}$ and let 
$$
d\nu({\bf H})=\pi^{-n^2}\,\mathrm{exp}\Big(-\mathrm{trace}({\bf H}^{*}{\bf H})\Big)\,d{\bf H}
$$ 
be the Gaussian measure on $M_{n}$. This is the induced measure by the Gaussian random matrix with complex independent and identically distributed entries with zero mean and unit variance in the set of matrices,  when this is represented as an Euclidean space of dimension $2n^2$. Note that this probability measure is left and right invariant under unitary multiplication (i.e., $d\nu({\bf HU})=d\nu({\bf UH})=d\nu({\bf H})$ for every unitary ${\bf U}$). The following Theorem can be found on page 447 of \cite{Mac}.

\begin{theorem}\cite{Mac}\label{teo_mac}
For all Hermitian $n\times n$ matrices ${\bf A,B}$ and every partition $\lambda$
\begin{equation}
\int_{M_{n}}{s_{\lambda}({\bf AH^{*}BH})\,d\nu({\bf H})}=h(\lambda)s_{\lambda}({\bf A})s_{\lambda}({\bf B}),
\end{equation}
where $h(\lambda)$ is the product of the hook--lengths of $\lambda$.
\end{theorem}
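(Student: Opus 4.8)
The plan is to convert the left-hand side into a statement about the representation theory of the symmetric group. Write $d=|\lambda|$ for the number of cells of $\lambda$, and recall the Frobenius character formula: as a symmetric function $s_\lambda=\sum_{\mu\vdash d}z_\mu^{-1}\chi^\lambda_\mu\,p_\mu$, equivalently, for any $n\times n$ matrix $\mathbf{M}$,
$$
s_\lambda(\mathbf{M})=\frac{1}{d!}\sum_{\sigma\in S_d}\chi^\lambda(\sigma)\,p_{\mu(\sigma)}(\mathbf{M})=\frac{1}{d!}\sum_{\sigma\in S_d}\chi^\lambda(\sigma)\,\mathrm{Tr}\big(\mathbf{M}^{\otimes d}\,P_\sigma\big),
$$
where $\chi^\lambda$ is the irreducible $S_d$-character indexed by $\lambda$, $\mu(\sigma)$ the cycle type of $\sigma$, $p_\nu(\mathbf{M})=\prod_i\mathrm{Tr}(\mathbf{M}^{\nu_i})$, and $P_\sigma$ the operator on $(\C^n)^{\otimes d}$ permuting the tensor legs according to $\sigma$. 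Here $s_\lambda(\mathbf{M})$ means $s_\lambda$ evaluated at the eigenvalues of $\mathbf{M}$, which is a polynomial in the entries of $\mathbf{M}$, so no normality of $\mathbf{A}\mathbf{H}^{*}\mathbf{B}\mathbf{H}$ is needed and the integrand is a polynomial in the entries of $\mathbf{H},\overline{\mathbf{H}}$, hence $\nu$-integrable. It therefore suffices to evaluate, for each fixed $\sigma\in S_d$, the Gaussian average $\int_{M_n}\mathrm{Tr}\big((\mathbf{A}\mathbf{H}^{*}\mathbf{B}\mathbf{H})^{\otimes d}P_\sigma\big)\,d\nu(\mathbf{H})$.

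The core step is this Gaussian average. I would factor $(\mathbf{A}\mathbf{H}^{*}\mathbf{B}\mathbf{H})^{\otimes d}=\mathbf{A}^{\otimes d}(\mathbf{H}^{*})^{\otimes d}\mathbf{B}^{\otimes d}\mathbf{H}^{\otimes d}$ and integrate the pair $(\mathbf{H}^{*})^{\otimes d}\otimes\mathbf{H}^{\otimes d}$ using the Wick (Isserlis) formula for the complex Gaussian entries: $\E[\overline{H_{ab}}H_{cd}]=\delta_{ac}\delta_{bd}$, with higher mixed moments given by sums over pairings of the $\overline{H}$'s with the $H$'s, each pairing being a permutation $\tau\in S_d$. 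Writing the trace as a contraction of Kronecker deltas and collapsing them along the cycles of the permutations that appear, the result should be
$$
\int_{M_n}\mathrm{Tr}\big((\mathbf{A}\mathbf{H}^{*}\mathbf{B}\mathbf{H})^{\otimes d}P_\sigma\big)\,d\nu(\mathbf{H})=\sum_{\tau\in S_d}p_{\mu(\sigma\tau)}(\mathbf{A})\,p_{\mu(\tau)}(\mathbf{B}),
$$
which one checks on $d=1$ (both sides equal $\mathrm{Tr}\mathbf{A}\,\mathrm{Tr}\mathbf{B}$) and on $d=2$.

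Substituting this into the Frobenius expansion, putting $\rho=\sigma\tau$, and re-expanding each power sum in the Schur basis via $p_{\mu(\rho)}(\mathbf{A})=\sum_\kappa\chi^\kappa(\rho)\,s_\kappa(\mathbf{A})$ and $p_{\mu(\tau)}(\mathbf{B})=\sum_\nu\chi^\nu(\tau)\,s_\nu(\mathbf{B})$, one obtains
$$
\int_{M_n}s_\lambda(\mathbf{A}\mathbf{H}^{*}\mathbf{B}\mathbf{H})\,d\nu(\mathbf{H})=\sum_{\kappa,\nu}s_\kappa(\mathbf{A})\,s_\nu(\mathbf{B})\cdot\frac{1}{d!}\sum_{\rho,\tau\in S_d}\chi^\lambda(\rho\tau^{-1})\,\chi^\kappa(\rho)\,\chi^\nu(\tau).
$$
The triple character sum collapses by Schur orthogonality: summing over $\rho$ first gives the convolution identity $\sum_\rho\chi^\lambda(\rho\tau^{-1})\chi^\kappa(\rho)=\delta_{\lambda\kappa}\,\frac{d!}{f^\lambda}\,\chi^\lambda(\tau)$ (using that $S_d$-characters are real), and then $\frac{1}{d!}\sum_\tau\chi^\lambda(\tau)\chi^\nu(\tau)=\delta_{\lambda\nu}$, so only $\kappa=\nu=\lambda$ survives and the remaining constant is $d!/f^\lambda$ with $f^\lambda=\dim S^\lambda$. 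The hook length formula $f^\lambda=d!/h(\lambda)$, $h(\lambda)$ being the product of the hook lengths of $\lambda$, gives $d!/f^\lambda=h(\lambda)$, and the identity follows.

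There is no deep obstacle; the delicate part is the Wick step — matching the tensor-leg index conventions for $P_\sigma$, $(\mathbf{H}^{*})^{\otimes d}$, $\mathbf{H}^{\otimes d}$, and recognizing that the contracted deltas reorganize into the two power sums indexed by $\sigma\tau$ and $\tau$ (up to a harmless relabeling). It is only after invoking character orthogonality that the structural punchline — that the average factors as $s_\lambda(\mathbf{A})s_\lambda(\mathbf{B})$ — becomes visible. A shorter but less self-contained alternative is to use Schur--Weyl duality to write $s_\lambda(\mathbf{M})=\mathrm{Tr}_{V_\lambda}\rho_\lambda(\mathbf{M})$ for the irreducible polynomial $\GL_n$-representation $V_\lambda$, observe that $Y\mapsto\int_{M_n}\rho_\lambda(\mathbf{H})^{*}\,Y\,\rho_\lambda(\mathbf{H})\,d\nu(\mathbf{H})$ is unitarily bi-invariant and hence equals $c\,\mathrm{Tr}_{V_\lambda}(Y)\cdot\mathrm{Id}$ by Schur's lemma (as $V_\lambda$ stays irreducible over $\mathbf{U}_n$), and then pin down the constant $c$; but identifying $c=h(\lambda)$ again reduces to evaluating $\int_{M_n}s_\lambda(\mathbf{H}^{*}\mathbf{H})\,d\nu$, i.e.\ to a Wick computation.
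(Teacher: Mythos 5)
Your proposal is correct, and it is worth noting that the paper itself offers no proof of Theorem \ref{teo_mac} at all: the statement is simply quoted from Macdonald (p.~447 of \cite{Mac}), so yours is a genuine self-contained derivation rather than a variant of an argument in the text. The one step you flag as delicate does work out exactly as claimed: writing $\mathrm{Tr}\big((\mathbf{A}\mathbf{H}^{*}\mathbf{B}\mathbf{H})^{\otimes d}P_\sigma\big)=\sum\prod_m A_{i_m a_m}\overline{H_{b_m a_m}}B_{b_m c_m}H_{c_m i_{\sigma(m)}}$ and applying Wick with $\E[\overline{H_{ab}}H_{cd}]=\delta_{ac}\delta_{bd}$, the pairing $\tau$ forces $b_m=c_{\tau(m)}$ and $a_m=i_{\sigma\tau(m)}$, so the $\mathbf{A}$-indices contract along the cycles of $\sigma\tau$ and the $\mathbf{B}$-indices along the cycles of $\tau$, giving precisely $\sum_{\tau}p_{\mu(\sigma\tau)}(\mathbf{A})\,p_{\mu(\tau)}(\mathbf{B})$; the remaining steps (Frobenius expansion, the convolution identity $\sum_\rho\chi^\lambda(\rho\tau^{-1})\chi^\kappa(\rho)=\delta_{\lambda\kappa}(d!/f^\lambda)\chi^\lambda(\tau)$, orthogonality, and $f^\lambda=d!/h(\lambda)$) are standard and correctly applied, and the bookkeeping remains consistent when $\ell(\kappa)>n$ since then $s_\kappa\equiv 0$ in $n$ variables, so both sides vanish for $\ell(\lambda)>n$ as they should. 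Your alternative sketch via Schur--Weyl duality and Schur's lemma (bi-invariance of $Y\mapsto\int\rho_\lambda(\mathbf{H})^{*}Y\rho_\lambda(\mathbf{H})\,d\nu(\mathbf{H})$ on the irreducible $\GL_n$-module $V_\lambda$) is closer in spirit to how such factorization results are usually obtained and explains structurally why the answer splits as $s_\lambda(\mathbf{A})s_\lambda(\mathbf{B})$, but, as you note, it still needs a moment computation to identify the constant as $h(\lambda)$; the character-sum route you carry out delivers the constant and the factorization in one pass at the cost of heavier index manipulation.
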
 

\noindent Denote by $(m-k,1^{k})$ the partition $(m-k,1,1,\ldots,1)$ with $k$ ones. It is a well known fact in matrix theory (see \cite{Fulton} or \cite{Mac}) that for every Hermitian $n\times n$ matrix ${\bf A}$ and for every integer $m$ 
\begin{equation}\label{traceschur}
\mathrm{Tr}({\bf A}^{m})=\sum_{k=0}^{n-1}{(-1)^{k}s_{(m-k,1^{k})}({\bf A})}. 
\end{equation}
Note that for the case $1\leq m<n$, even though the sum is up to the $n-1$ term, all the terms between $\min\{n,m\}$ and $n-1$ are zero. In particular, 

\begin{itemize}
\item $\mathrm{Tr}({\bf A})=s_{(1)}({\bf A})$,
\item $\mathrm{Tr}({\bf A}^2)=s_{(2,0)}({\bf A})-s_{(1,1)}({\bf A})$,
\item $\mathrm{Tr}({\bf A}^{3})=s_{(3,0)}({\bf A})-s_{(2,1)}({\bf A})+s_{(1,1,1)}({\bf A})$,
\item $\mathrm{Tr}({\bf A}^{4})=s_{(4,0)}({\bf A})-s_{(3,1)}({\bf A})+s_{(2,1,1)}({\bf A})-s_{(1,1,1,1)}({\bf A})$.
\end{itemize}

\noindent The constant $s_{(m-k,1^{k})}({\bf I}_{p})$ is equal to
$$
s_{(m-k,1^{k})}({\bf I}_{p})=\frac{(m+p-(k+1))!}{k!(p-(k+1))!(m-(k+1)!m}
$$ 
(see \cite{Mac} for a proof of this formula). Therefore,
\begin{equation}\label{const}
\frac{s_{(m-k,1^{k})}({\bf I}_{p})}{s_{(m-k,1^{k})}({\bf I}_{n})}=\frac{(m+p-(k+1))!}{(m+n-(k+1))!}\cdot\frac{(n-(k+1))!}{(p-(k+1))!}.
\end{equation}

\noindent For every $\alpha>0$ let us define the following class of functions 
\begin{equation}
L^{2}_{\alpha}: = \Big\{f:\R^{+}\to\R\,:\text{measurable such that}\,\,\, \int_{0}^{\infty}{e^{-\alpha t}\,|f(t)|^2\,dt<\infty}\Big\}.
\end{equation}
This is a Hilbert space with respect to the inner product $\pr{f}{g}_{\alpha}=\int_{0}^{\infty}{e^{-\alpha t}\,f(t)g(t)\,dt}$. Moreover, polynomials are dense with respect to this norm (see Chapter 10 in \cite{Laguerre}). Let $\mathcal{A}_{\alpha}$ be the set of continuous functions in $L^{2}_{\alpha}$ and let $\mathcal{A}$ be the intersection of all the $\mathcal{A}_{\alpha}$, 
$$
\mathcal{A}=\bigcap_{\alpha>0}{\mathcal{A}_{\alpha}}.
$$
Note that the family $\mathcal{A}$ is a very rich family of functions. For instance, all functions that do not grow faster than polynomials belong to this family. In particular, $f(t)=\log(1+t)\in\mathcal{A}$.

\begin{theorem}\label{main}
Let ${\bf A}$ be an $n\times n$ positive definite matrix and let $\{d_{1},\ldots,d_{n}\}$ be the set of eigenvalues of ${\bf A}$. Assume that all the eigenvalues are different. Then for every $f\in\mathcal{A}$ 
we have that 
\begin{equation}\label{trf}
\int_{M_{n}}{\mathrm{Tr}\Big(f({\bf H^{*}AH}))\Big)\,d\nu({\bf H})}=\frac{1}{\det(\Delta({\bf D}))}\sum_{k=0}^{n-1}{\det({\bf T}_{k})},
\end{equation}
where $\Delta({\bf D})$ is the Vandermonde matrix associated with the matrix ${\bf D}=\mathrm{diag}(d_1,\ldots,d_n)$ and ${\bf T}_{k}$ is the matrix constructed by replacing the $(k+1)$ row of $\Delta({\bf D})$ ($\{d_{i}^{n-(k+1)}\}_{i=1}^{n}$) by 
$$
\frac{1}{(n-(k+1))!}\{f_{k}(d_{i})\}_{i=1}^{n}
$$ 
where
$$
f_{k}(x):=\int_{0}^{\infty}{e^{-t}(tx)^{n-(k+1)}f(tx)\,dt}.
$$
\end{theorem}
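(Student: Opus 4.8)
The plan is to verify \eqref{trf} first for the monomials $f(x)=x^{m}$, then extend it to all polynomials by linearity, and finally to an arbitrary $f\in\mathcal{A}$ by density. So I would begin with $f(x)=x^{m}$. Since $\mathbf{H}^{*}\mathbf{A}\mathbf{H}$ is Hermitian, the trace--Schur identity \eqref{traceschur} gives $\mathrm{Tr}\big((\mathbf{H}^{*}\mathbf{A}\mathbf{H})^{m}\big)=\sum_{k=0}^{n-1}(-1)^{k}s_{(m-k,1^{k})}(\mathbf{H}^{*}\mathbf{A}\mathbf{H})$, with the convention that a summand vanishes whenever $(m-k,1^{k})$ is not a partition. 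Writing $\mathbf{H}^{*}\mathbf{A}\mathbf{H}=\mathbf{I}_{n}\,\mathbf{H}^{*}\mathbf{A}\mathbf{H}$ and applying Theorem \ref{teo_mac} to the Hermitian pair $(\mathbf{I}_{n},\mathbf{A})$, each term integrates against $d\nu$ to $h\big((m-k,1^{k})\big)\,s_{(m-k,1^{k})}(\mathbf{I}_{n})\,s_{(m-k,1^{k})}(\mathbf{A})$; and using the hook lengths of the hook-shaped partition $(m-k,1^{k})$ together with the value of $s_{(m-k,1^{k})}(\mathbf{I}_{n})$ recalled above (equivalently, the identity $h(\lambda)\,s_{\lambda}(\mathbf{I}_{n})=\prod_{(i,j)\in\lambda}(n+j-i)$), the coefficient of $s_{(m-k,1^{k})}(\mathbf{A})$ collapses to $(m+n-k-1)!/(n-k-1)!$. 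Hence
$$
\int_{M_{n}}\mathrm{Tr}\big((\mathbf{H}^{*}\mathbf{A}\mathbf{H})^{m}\big)\,d\nu(\mathbf{H})=\sum_{k=0}^{n-1}(-1)^{k}\,\frac{(m+n-k-1)!}{(n-k-1)!}\,s_{(m-k,1^{k})}(\mathbf{A}).
$$

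The next step is to rewrite this as a sum of determinants, using the bialternant formula $s_{\lambda}=a_{\lambda+\delta}/\Delta$ with $\delta=(n-1,n-2,\ldots,1,0)$. For $\lambda=(m-k,1^{k})$ the decreasing rearrangement of the exponent sequence $\lambda+\delta$ is obtained from $\delta$ by deleting the entry $n-k-1$ and inserting $m+n-k-1$, and restoring the decreasing order is a cyclic permutation of the first $k+1$ entries, contributing a sign $(-1)^{k}$. Hence $s_{(m-k,1^{k})}(\mathbf{A})=(-1)^{k}\det(\mathbf{T}_{k}')/\det(\Delta(\mathbf{D}))$, where $\mathbf{T}_{k}'$ is $\Delta(\mathbf{D})$ with its $(k+1)$-st row $\{d_{i}^{n-k-1}\}_{i}$ replaced by $\{d_{i}^{m+n-k-1}\}_{i}$. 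The two factors $(-1)^{k}$ cancel; and since $f_{k}(x)=\int_{0}^{\infty}e^{-t}(tx)^{n-k-1}(tx)^{m}\,dt=(m+n-k-1)!\,x^{m+n-k-1}$ when $f(x)=x^{m}$, the scalar $(m+n-k-1)!/(n-k-1)!$ is exactly the factor turning $\mathbf{T}_{k}'$ into the matrix $\mathbf{T}_{k}$ of the statement. Thus \eqref{trf} holds for $f(x)=x^{m}$, hence, both sides being linear in $f$, for every polynomial $f$. (For $m=0$ one verifies directly that $f_{k}(d_{i})/(n-k-1)!=d_{i}^{n-k-1}$, so $\mathbf{T}_{k}=\Delta(\mathbf{D})$ and the right-hand side equals $n$; and for $1\le m<n$ the ``extra'' summands with $k\ge m$ produce two proportional rows in $\mathbf{T}_{k}$, so $\det\mathbf{T}_{k}=0$, in accordance with \eqref{traceschur}.)

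It remains to extend \eqref{trf} from polynomials to all of $\mathcal{A}$. Since both sides are linear in $f$, it suffices to produce $\alpha_{0}>0$ for which each side defines a bounded linear functional on $L^{2}_{\alpha_{0}}$, because polynomials are dense in $L^{2}_{\alpha_{0}}$ and $\mathcal{A}\subseteq L^{2}_{\alpha_{0}}$. For the right-hand side, expanding $\det\mathbf{T}_{k}$ along its modified row exhibits it as a fixed linear combination of $f_{k}(d_{1}),\ldots,f_{k}(d_{n})$; after the substitution $u=td_{i}$ one has $f_{k}(d_{i})=d_{i}^{-1}\int_{0}^{\infty}e^{-u/d_{i}}\,u^{n-k-1}\,f(u)\,du$, and Cauchy--Schwarz bounds $|f_{k}(d_{i})|$ by a constant times $\norm{f}_{\alpha_{0}}$ provided $\alpha_{0}<2/d_{i}$, i.e. provided $\alpha_{0}<2/\norm{\mathbf{A}}$. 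For the left-hand side, write $\mathrm{Tr}(f(\mathbf{H}^{*}\mathbf{A}\mathbf{H}))=\sum_{i}f(\mu_{i})$, where the eigenvalues satisfy $0\le\mu_{i}\le\norm{\mathbf{A}}\,\mathrm{Tr}(\mathbf{H}^{*}\mathbf{H})$; then $|\mathrm{Tr}(f(\mathbf{H}^{*}\mathbf{A}\mathbf{H}))|\le\sqrt{n}\,\big(\sum_{i}|f(\mu_{i})|^{2}\big)^{1/2}$, and by Jensen's inequality it is enough to bound $\int_{M_{n}}\mathrm{Tr}\big(|f|^{2}(\mathbf{H}^{*}\mathbf{A}\mathbf{H})\big)\,d\nu=\int_{0}^{\infty}|f(s)|^{2}\rho(s)\,ds$, where $\rho$ denotes the $d\nu$-averaged spectral density of $\mathbf{H}^{*}\mathbf{A}\mathbf{H}$. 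The key ingredient is a pointwise exponential bound $\rho(s)\le C\,e^{-\alpha_{0}s}$ for some $\alpha_{0}>0$; this can be read off from the explicit Laguerre-type joint eigenvalue density of $\mathbf{H}^{*}\mathbf{A}\mathbf{H}$, or deduced more crudely from the operator inequality $\mathbf{H}^{*}\mathbf{A}\mathbf{H}\preceq\norm{\mathbf{A}}\,\mathbf{H}^{*}\mathbf{H}$ together with the finiteness of $\int_{M_{n}}\mathrm{Tr}\big(e^{c\,\mathbf{H}^{*}\mathbf{H}}\big)\,d\nu$ for every $c<1$. Choosing $\alpha_{0}$ below $1/\norm{\mathbf{A}}$ (hence also below $2/\norm{\mathbf{A}}$), both sides of \eqref{trf} are bounded on $L^{2}_{\alpha_{0}}$ and they agree on the dense subspace of polynomials, so they agree on all of $L^{2}_{\alpha_{0}}\supseteq\mathcal{A}$.

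The algebra in the first two steps is essentially bookkeeping, once one has the hook-length collapse and the sign $(-1)^{k}$ produced by the row permutation. The genuine obstacle is the last step: one needs an honest exponential tail estimate for the averaged spectrum of $\mathbf{H}^{*}\mathbf{A}\mathbf{H}$ that is strong enough to make $f\mapsto\int_{M_{n}}\mathrm{Tr}(f(\mathbf{H}^{*}\mathbf{A}\mathbf{H}))\,d\nu$ continuous in the $L^{2}_{\alpha_{0}}$ norm, and one must check that this is the same space in which polynomials are dense and the same one forced by the integral transform $f\mapsto f_{k}$. Working under the hypothesis $f\in\mathcal{A}=\cap_{\alpha>0}L^{2}_{\alpha}$, rather than in a single fixed $L^{2}_{\alpha_{0}}$, is precisely what lets one avoid tracking the optimal values of these constants.
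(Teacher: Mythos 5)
Your handling of the polynomial case is exactly the paper's argument: Macdonald's integral (Theorem \ref{teo_mac}) applied to the pair $(\mathbf{I}_n,\mathbf{A})$, the hook expansion \eqref{traceschur}, the collapse $h(\lambda_k)s_{\lambda_k}(\mathbf{I}_n)=(m+n-k-1)!/(n-k-1)!$, the bialternant formula with the $(-1)^k$ row cycle cancelling the alternating sign, and the Gamma integral identifying $(m+n-k-1)!\,x^{m+n-k-1}$ with $f_k(x)$; your observation that the terms with $k\ge m$ vanish because of proportional rows is the paper's remark after \eqref{traceschur}. Where you genuinely diverge is the passage from polynomials to $\mathcal{A}$. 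The paper fixes $\alpha=1/\max_i d_i$, approximates $f$ by polynomials $p^{(r)}$ in $\norm{\cdot}_\alpha$, and verifies only that the right-hand side converges, i.e.\ $p^{(r)}_k(d_i)\to f_k(d_i)$ by Cauchy--Schwarz, leaving the convergence of the Gaussian averages $\int\mathrm{Tr}(p^{(r)}(\mathbf{H}^*\mathbf{A}\mathbf{H}))\,d\nu\to\int\mathrm{Tr}(f(\mathbf{H}^*\mathbf{A}\mathbf{H}))\,d\nu$ unaddressed; you correctly identify this left-hand continuity as the real obstacle and supply it, making both sides bounded functionals on a single $L^2_{\alpha_0}$ with $\alpha_0<1/\norm{\mathbf{A}}$ and concluding by density, which yields a more complete argument than the paper's. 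One caveat on your sketch: the ``cruder'' route via $\mathbf{H}^*\mathbf{A}\mathbf{H}\preceq\norm{\mathbf{A}}\,\mathbf{H}^*\mathbf{H}$ and the integrability of $\mathrm{Tr}(e^{c\,\mathbf{H}^*\mathbf{H}})$ gives, as stated, an exponential bound on the tail $\int_s^\infty\rho(u)\,du$ of the averaged spectral measure, not the pointwise bound $\rho(s)\le Ce^{-\alpha_0 s}$ that your Cauchy--Schwarz step against $e^{-\alpha_0 s}\,ds$ actually uses; to finish cleanly you should invoke the explicit correlated-Wishart (Laguerre-type) eigenvalue density, whose terms carry factors $e^{-s/d_i}$ and hence do give the pointwise exponential bound for any $\alpha_0<1/\norm{\mathbf{A}}$, or otherwise upgrade the tail estimate before applying it. With that repair your proof is sound and strictly more careful than the paper's at the approximation step.
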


\begin{proof}
\noindent First, we will prove the Theorem for polynomials. Let $p$ and $q$ be two polynomials. It is clear that 
$$
\mathrm{Tr}\big((p+q)({\bf H^{*}AH})\big)=\mathrm{Tr}\big(p({\bf H^{*}AH})\big)+\mathrm{Tr}\big(q({\bf H^{*}AH})\big)
$$ 
and $(p+q)_{k}=p_{k}+q_{k}$ for every $k=0,\ldots,n-1$. Therefore, both sides of the Equation (\ref{trf}) are linear and it is enough to prove the Theorem for the case $p(x)=x^{m}$ with $m\geq 0$. Using Theorem \ref{teo_mac} and Equation (\ref{traceschur}) we see that for every positive definite $n\times n$ matrix ${\bf A}$, the average
$$
\int_{M_{n}}{\mathrm{Tr}\Big(({\bf H^{*}AH})^m\Big)\,d\nu({\bf H})} = \sum_{k=0}^{n-1}{(-1)^k h(\lambda_k)s_{\lambda_k}({\bf A})s_{\lambda_{k}}({\bf I}_n)}, 
$$
where $\lambda_{k}$ is the partition $(m-k,1^{k})$. It is well known (see \cite{Fulton}) that for every partition $\lambda=(\lambda_{1},\ldots,\lambda_{n})$ 
\begin{equation}
s_{\lambda}({\bf I}_n)=\prod_{1\leq i\leq j\leq n}{\frac{\lambda_i-\lambda_j+j-i}{j-i}}.
\end{equation}
Therefore, we can deduce that
\begin{equation}\label{cons}
s_{\lambda_k}({\bf I}_n)=\frac{1}{m}\cdot\frac{(m+n-(k+1))!}{k!\,(n-(k+1))!\,(m-(k+1))!}.
\end{equation}
\noindent We can see by direct examination that the hook--length of the partition $\lambda_{k}$ is equal to 
$$
h(\lambda_{k})=k!\,(m-(k+1))!\,m.
$$
Hence, 
$$
s_{\lambda_k}({\bf I}_n)h(\lambda_k)=\frac{(m+n-(k+1))!}{(n-(k+1))!}.
$$
Since ${\bf A}$ is a positive definite matrix, by the spectral Theorem there exists ${\bf U}$ unitary and ${\bf D}=\mathrm{diag}(d_1,\ldots,d_{n})$ diagonal such that ${\bf A}={\bf UDU^{*}}$. Note that the $d_{i}$ are the eigenvalues of ${\bf A}$. By definition of the Schur polynomials 
$$
s_{\lambda_{k}}({\bf A})=s_{\lambda_{k}}({\bf D})=\frac{\det({\bf S}_k)}{\det(\Delta({\bf D}))},
$$ 
where $\Delta({\bf D})$ is the Vandermonde matrix associated with the sequence $\{d_{i}\}_{i=1}^{n}$ and ${\bf S}_{k}$ is a matrix whose $i$--th column is equal to 
$$
\begin{pmatrix}
d_{i}^{n-1+m-k} \\ 
d_{i}^{n-2+1} \\
d_{i}^{n-3+1} \\
\vdots \\ 
d_{i}^{n-(k+1)+1} \\ 
d_{i}^{n-(k+2)}\\
\vdots \\
d_{i}^{n-(n-1)}\\
1\\
\end{pmatrix}.
$$
It is easy to see that after $k$ transpositions of the rows of the matrix ${\bf S}_{k}$ we obtain a new matrix ${\bf H}_{k}$ whose $i$--th column is equal to 
$$
\begin{pmatrix}
d_{i}^{n-1} \\ 
d_{i}^{n-2} \\
d_{i}^{n-3} \\
\vdots \\ 
d_{i}^{n-k} \\ 
d_{i}^{n+m-(k+1)}\\
d_{i}^{n-(k+2)}\\
\vdots \\
d_{i}^{n-(n-1)}\\
1\\
\end{pmatrix}.
$$
This matrix is equal to the matrix $\Delta({\bf D})$ except for the $(k+1)$ row, $\{d_{i}^{n-(k+1)}\}_{i=1}^{n}$, which is substituted by   
the row $\{d_{i}^{n+m-(k+1)}\}_{i=1}^{n}$. Note also that 
$$
\det({\bf S}_{k})=(-1)^{k}\det({\bf H}_k).
$$
Therefore,
$$
\int_{M_{n}}{\mathrm{Tr}\Big(({\bf H^{*}AH})^m\Big)\,d\nu({\bf H})} = \frac{1}{\det(\Delta({\bf D}))}\sum_{k=0}^{n-1}{\frac{(m+n-(k+1))!}{(n-(k+1))!}\cdot\det({\bf H}_k)}.
$$
Using the fact that $\int_{0}^{\infty}{e^{-t}\,t^p\,dt}=p!$ and the definition of $p_{k}(x)$ for the case $p(x)=x^{m}$ we see that
\begin{equation}
p_{k}(x):= \int_{0}^{\infty}{e^{-t}(tx)^{n+m-(k+1)}\,dt} = (m+n-(k+1))\,!\,\, x^{m+n-(k+1)}.
\end{equation}
Therefore, our claim holds and we have proven the result for all polynomials. Now consider $f\in\mathcal{A}$ and let $\beta$ be the maximum eigenvalue, i.e., $\beta=\max\{d_1,\ldots,d_{n}\}$. Define $\alpha=1/\beta$. Since $f\in\mathcal{A}$, then $f\in\mathcal{A}_\alpha$ and let $\{p^{(r)}\}_{r\geq 1}$ be a sequence of polynomials such that $\norm{f-p^{(r)}}_{\alpha}\to 0$. Let ${\bf T}_{k}^{(n)}$ be the matrix constructed by replacing the $(k+1)$ row of $\Delta({\bf D})$ ($\{d_{i}^{n-(k+1)}\}_{i=1}^{n}$) by 
$$
\frac{1}{(n-(k+1))!}\{p_{k}^{(r)}(d_{i})\}_{i=1}^{n},
$$ 
where
$$
p_{k}^{(r)}(x):=\int_{0}^{\infty}{e^{-t}(tx)^{n-(k+1)}p^{(r)}(tx)\,dt}.
$$
Let ${\bf T}_{k}$ be the matrix constructed by replacing the $(k+1)$ row of $\Delta({\bf D})$ by 
$$
\frac{1}{(n-(k+1))!}\{f_{k}(d_{i})\}_{i=1}^{n},
$$ 
where
$$
f_{k}(x):=\int_{0}^{\infty}{e^{-t}(tx)^{n-(k+1)}f(tx)\,dt}.
$$
To prove that Equation (\ref{trf}) holds it is enough to prove that 
$$
\det({\bf T}_{k}^{(n)})\to \det({\bf T}_{k})
$$ 
as $n\to\infty$ for every $k=0,1,\ldots,n-1$. For this, it is enough to prove that $p_{k}^{(r)}(d_i)\to f_{k}(d_i)$ for every $k$ and every $i=1,2,\ldots,n$. Note that
\begin{eqnarray*}
\lefteqn{|f_{k}(d_i)-p_{k}^{(r)}(d_i)|} \\ 
& = & \int_{0}^{\infty}{e^{-t}(td_i)^{n-(k+1)}|f(td_i)-p^{(r)}(td_i)|\,dt}\\
& \leq & d_{i}^{n-(k+1)}\sqrt{(2(n-(k+1)))!} \cdot  \Big( \int_{0}^{\infty}{e^{-t}|f(td_i)-p^{(r)}(tdi)|^2\,dt}\Big)^{\frac{1}{2}}\\
& = & d_{i}^{n-(k+\frac{3}{2})}\sqrt{(2(n-(k+1)))!}\cdot \Big(\int_{0}^{\infty}{e^{-\frac{t}{d_i}}|f(t)-p^{(r)}(t)|^2\,dt}\Big)^{\frac{1}{2}},
\end{eqnarray*}
where we use Cauchy-Schwartz for the second inequality and change of variable for the last one. Now, by construction the sequence $\{p^{(r)}\}$ satisfies
$$
\lim_{n\to\infty}{\norm{f-p^{(r)}}}_{\alpha}^{2}=\lim_{n\to\infty}{\int_{0}^{\infty}{e^{-\alpha t}|f(t)-p^{(r)}(t)|^2\,dt}}=0
$$
and $\alpha\leq d_{i}^{-1}$. Hence, we see that 
$$
\lim_{n\to\infty}{|f_{k}(d_i)-p_{k}^{(r)}(d_i)|}=0
$$ 
finishing the proof.
\end{proof}

\begin{obs}
We would like to observe that the case when not all the eigenvalues are different can be treated as above by perturbing of the original eigenvalues and applying a subsequent limit. We present an instance of this situation in Corollary \ref{ex}.
\end{obs}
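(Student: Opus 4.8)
The plan is to realise both sides of (\ref{trf}) as functions of the spectrum alone, to show that each extends across the locus where eigenvalues collide, and to propagate the identity from the dense set of distinct-eigenvalue configurations. Because $d\nu$ is invariant under $\mathbf{H}\mapsto\mathbf{UH}$, writing $\mathbf{A}=\mathbf{UDU^{*}}$ gives $\int_{M_{n}}\mathrm{Tr}(f(\mathbf{H^{*}AH}))\,d\nu=\int_{M_{n}}\mathrm{Tr}(f(\mathbf{H^{*}DH}))\,d\nu$, so the left-hand side is a symmetric function $\Phi(d_{1},\ldots,d_{n})$ defined on all of $(0,\infty)^{n}$, coincidences included, while the right-hand side defines a function $\Psi$ on the open set of distinct eigenvalues where, by Theorem \ref{main}, $\Phi=\Psi$.

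The cleanest step is to extend $\Psi$ across the diagonal. After the substitution $u=tx$ the modified entries become $f_{k}(x)=x^{-1}\int_{0}^{\infty}e^{-u/x}u^{\,n-(k+1)}f(u)\,du$, so the $x$-dependence passes entirely through the real-analytic kernel $x^{-1}e^{-u/x}$; differentiating under the integral (justified by Cauchy--Schwarz and $f\in\mathcal{A}$) shows each $f_{k}$ is real-analytic on $(0,\infty)$. Hence every $\det(\mathbf{T}_{k})$ is real-analytic in $(d_{1},\ldots,d_{n})$, and it is alternating because interchanging $d_{i}$ and $d_{j}$ swaps two columns. An alternating real-analytic function is divisible by $\prod_{i<j}(d_{i}-d_{j})$, which equals $\det(\Delta(\mathbf{D}))$ up to sign, with real-analytic quotient; therefore $\Psi=\det(\Delta(\mathbf{D}))^{-1}\sum_{k}\det(\mathbf{T}_{k})$ has a removable singularity and extends continuously to all of $(0,\infty)^{n}$. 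At a collision the extension is computed by L'H\^{o}pital's rule, which replaces the coalescing rows of $\Delta(\mathbf{D})$ and of $\mathbf{T}_{k}$ by successive derivatives, i.e. by a confluent Vandermonde determinant; Corollary \ref{ex} carries this out in a concrete case.

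It remains to check that $\Phi$ is continuous, so that $\Phi=\Psi$ passes from the dense distinct-eigenvalue set to the diagonal. Confining the eigenvalues to a compact box $[a,b]^{n}$ with $0<a\le b$ around the target spectrum $\mathbf{d}^{0}$, the eigenvalues $\mu_{j}$ of $\mathbf{H^{*}DH}$ lie in $[0,b\,\mathrm{Tr}(\mathbf{H^{*}H})]$ and vary continuously with $\mathbf{D}$ for each fixed $\mathbf{H}$, so $\mathrm{Tr}(f(\mathbf{H^{*}DH}))$ converges pointwise in $\mathbf{H}$ as $\mathbf{D}\to\mathbf{D}^{0}$. The perturbation is then transparent: pick distinct $\mathbf{d}^{\varepsilon}\to\mathbf{d}^{0}$ in the box, apply Theorem \ref{main} to each, and let $\varepsilon\to0$; the right-hand side tends to $\Psi(\mathbf{d}^{0})$ by the previous paragraph, and the left-hand side to $\Phi(\mathbf{d}^{0})$, giving (\ref{trf}) at $\mathbf{d}^{0}$. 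I expect the passage to the limit on the left to be the main obstacle: one must exhibit a $\nu$-integrable dominating function valid throughout the box, and this is exactly where the space $\mathcal{A}$ is used --- the growth of $f$ allowed by $f\in\mathcal{A}_{\alpha}$ for $\alpha$ just below $1/b$ is slow enough to be overwhelmed by the Gaussian tail $e^{-\mathrm{Tr}(\mathbf{H^{*}H})}$ of $d\nu$, so that dominated convergence (for polynomially bounded $f$), or a uniform-integrability argument in general, closes the gap.
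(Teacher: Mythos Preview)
The paper does not actually prove this remark: it simply asserts that the degenerate case is handled by perturbing the eigenvalues and passing to the limit, and then illustrates the procedure once in Corollary~\ref{ex} by writing $d_{1}=d+\epsilon$, $d_{2}=d$ and computing the two relevant derivatives by hand. There is no general argument offered for why the limit of the right-hand side exists or why it agrees with the left-hand side.

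Your proposal therefore goes well beyond the paper. The route you take---reducing to the spectrum by unitary invariance, observing that each $\det(\mathbf{T}_{k})$ is alternating and real-analytic in $(d_{1},\ldots,d_{n})$ and hence divisible by the Vandermonde with analytic quotient, so that $\Psi$ extends across the diagonal as a confluent Vandermonde ratio---is a clean and correct way to see why the perturbation limit on the right exists for every coincidence pattern, not just the $2\times2$ one. The paper's ad hoc L'H\^{o}pital computation in Corollary~\ref{ex} is the $n=2$ shadow of your general divisibility statement.

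The one place where your sketch is genuinely incomplete, and you flag it yourself, is the continuity of $\Phi$ on the left. Membership in $\mathcal{A}$ gives no pointwise growth bound on $f$ (only a weighted $L^{2}$ bound), so a naive dominating function is not immediate for general $f\in\mathcal{A}$; your fallback to uniform integrability is the right instinct, but the paper says nothing about this either. For the applications the paper cares about ($\log(1+x)$ and $(1+x)^{-1}$) your polynomial-growth domination argument already suffices, which matches the spirit of the remark.
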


\noindent As a consequence we have a new formula for the capacity of the MIMO communication channel and for the MMSE described in the introduction.
\vspace{0.3cm}
\begin{cor}
Let ${\bf A}$ be as in Theorem \ref{main}. Then
\begin{equation}
\int_{M_{n}}{\mathrm{Tr}\Big(\log({\bf I}_{n}+{\bf H^{*}AH})\Big)\,d\nu({\bf H})} = \frac{1}{\det(\Delta({\bf D}))}\sum_{k=0}^{n-1}{\det({\bf T}_{k})},
\end{equation}
where ${\bf T}_{k}$ is the matrix constructed by replacing the $(k+1)$ row of $\Delta({\bf D})$ ($\{d_{i}^{n-(k+1)}\}_{i=1}^{n}$) by 
\begin{equation*}
\Bigg\{ \frac{1}{(n-(k+1))!}\int_{0}^{\infty}{e^{-t}(td_{i})^{n-(k+1)}\log(1+td_i)\,dt}\Bigg\}_{i=1}^{n}.
\end{equation*}
\end{cor}

\vspace{0.3cm}
\begin{cor}
Let ${\bf A}$ be as in theorem \ref{main}. Then
\begin{equation}
\int_{M_{n}}{\mathrm{Tr}\Big(({\bf I}_{n}+{\bf H^{*}AH})^{-1}\Big)\,d\nu({\bf H})} = \frac{1}{\det(\Delta({\bf D}))}\sum_{k=0}^{n-1}{\det({\bf T}_{k})},
\end{equation}
where ${\bf T}_{k}$ is the matrix constructed by replacing the $(k+1)$ row of $\Delta({\bf D})$ ($\{d_{i}^{n-(k+1)}\}_{i=1}^{n}$) by 
\begin{equation*}
\Bigg\{ \frac{1}{(n-(k+1))!}\int_{0}^{\infty}{e^{-t}(td_{i})^{n-(k+1)}(1+td_i)^{-1}\,dt}\Bigg\}_{i=1}^{n}.
\end{equation*}
\end{cor}

\noindent As an application let us compute explicitly the two dimensional case for the capacity.

\begin{cor}\label{ex}
Let ${\bf A}$ be an Hermitian $2\times 2$ matrix with eigenvalues $d_{1}$ and $d_{2}$. If $d_1\neq d_2$ then 
$$
\int_{M_{2}}{\mathrm{Tr}\Big(\log({\bf I}_{2}+{\bf H^{*}AH})\Big)\,d\nu({\bf H})} = \frac{f_{0}(d_1)-f_{0}(d_2)+d_{1}f_{1}(d_2)-d_{2}f_{1}(d_{1})}{d_{1}-d_{2}},
$$
where $f_{0}(d_{i})=\int_{0}^{\infty}{e^{-t}td_{i}\log(1+td_{i})\,dt}$ and $f_{1}(d_{i})=\int_{0}^{\infty}{e^{-t}\log(1+td_{i})\,dt}$. If $d_1=d_2=d$ then 
$$
\int_{M_{2}}{\mathrm{Tr}\Big(\log({\bf I}_{2}+d\cdot{\bf H^{*}H})\Big)\,d\nu({\bf H})} =  \int_{0}^{\infty}{e^{-t}\Big[(1+t)\log(1+td)+\frac{td(t-1)}{1+td}\Big]\,dt}.
$$
\end{cor}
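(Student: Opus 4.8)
The plan is to specialise the capacity formula (Theorem~\ref{main} with $f(x)=\log(1+x)$) to $n=2$ for the non-degenerate case, and then recover the case $d_1=d_2$ by a limiting argument. For $n=2$ the Vandermonde matrix is
$$
\Delta({\bf D})=\begin{pmatrix} d_1 & d_2 \\ 1 & 1\end{pmatrix},\qquad \det(\Delta({\bf D}))=d_1-d_2 .
$$
Since $(n-(k+1))!=1!=1$ for $k=0$ and $(n-(k+1))!=0!=1$ for $k=1$, the matrix ${\bf T}_0$ is $\Delta({\bf D})$ with its first row $(d_1,d_2)$ replaced by $(f_0(d_1),f_0(d_2))$, so $\det({\bf T}_0)=f_0(d_1)-f_0(d_2)$, and ${\bf T}_1$ is $\Delta({\bf D})$ with its second row $(1,1)$ replaced by $(f_1(d_1),f_1(d_2))$, so $\det({\bf T}_1)=d_1 f_1(d_2)-d_2 f_1(d_1)$; here $f_0$ and $f_1$ are exactly the functions $f_k$ of Theorem~\ref{main} for $n=2$ and $f(x)=\log(1+x)$. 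Dividing $\det({\bf T}_0)+\det({\bf T}_1)$ by $\det(\Delta({\bf D}))=d_1-d_2$ yields the asserted formula when $d_1\neq d_2$; this first part is bookkeeping with $2\times 2$ determinants.

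For $d_1=d_2=d$ I would first note that the left-hand side is continuous in ${\bf A}$: the map ${\bf A}\mapsto\mathrm{Tr}(\log({\bf I}_2+{\bf H^{*}AH}))$ is continuous, and for ${\bf A}$ in a compact neighbourhood of $d\,{\bf I}_2$ one has $0\le\mathrm{Tr}(\log({\bf I}_2+{\bf H^{*}AH}))\le\norm{\bf A}\,\mathrm{Tr}({\bf H^{*}H})$ (using $0\le\log(1+x)\le x$), and $\mathrm{Tr}({\bf H^{*}H})$ is $\nu$-integrable, so dominated convergence gives continuity; alternatively one appeals to the Remark following Theorem~\ref{main}. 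Hence the value at $d_1=d_2=d$ equals the limit as $d_2\to d$ of the formula above with $d_1=d$ held fixed. That limit has the indeterminate form $0/0$, so I apply the rule of l'H\^opital in the variable $d_2$: the derivative of the denominator $d-d_2$ is $-1$, while differentiating the numerator $N(d_2)=f_0(d)-f_0(d_2)+d\,f_1(d_2)-d_2 f_1(d)$ and setting $d_2=d$ gives $-f_0'(d)+d\,f_1'(d)-f_1(d)$. Therefore the value in the degenerate case is $f_0'(d)-d\,f_1'(d)+f_1(d)$.

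It remains to evaluate these derivatives by differentiating under the integral sign, which is legitimate since the $x$-derivatives of the integrands $e^{-t}tx\log(1+tx)$ and $e^{-t}\log(1+tx)$ are bounded, for $x$ in a neighbourhood of $d$, by $e^{-t}$ times a polynomial in $t$. One finds
$$
f_0'(x)=\int_{0}^{\infty}{e^{-t}\Big(t\log(1+tx)+\frac{t^2x}{1+tx}\Big)\,dt},\qquad
f_1'(x)=\int_{0}^{\infty}{e^{-t}\,\frac{t}{1+tx}\,dt}.
$$
Substituting into $f_0'(d)-d\,f_1'(d)+f_1(d)$, combining the two logarithmic contributions via $t\log(1+td)+\log(1+td)=(1+t)\log(1+td)$ and the two rational contributions via $\frac{t^2d}{1+td}-\frac{td}{1+td}=\frac{td(t-1)}{1+td}$, produces exactly
$$
\int_{0}^{\infty}{e^{-t}\Big[(1+t)\log(1+td)+\frac{td(t-1)}{1+td}\Big]\,dt},
$$
which is the claimed expression.

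I expect the only step not purely mechanical to be the continuity argument that licenses passing to the limit $d_2\to d$ in the non-degenerate formula; once that and the attendant dominated-convergence justifications are in place, the l'H\^opital computation, the differentiation under the integral sign, and the final algebraic simplification are routine.
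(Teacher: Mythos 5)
Your proposal is correct and follows essentially the same route as the paper: specialize Theorem \ref{main} to $n=2$ with $f(x)=\log(1+x)$ for $d_1\neq d_2$, then handle $d_1=d_2=d$ by perturbing the eigenvalues and passing to the limit, which the paper does via difference quotients and you do via l'H\^opital plus differentiation under the integral sign --- the same computation in mildly different dress. Your explicit dominated-convergence argument for continuity of the left-hand side is a welcome justification the paper only gestures at in its remark, but it does not change the approach.
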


\vspace{0.3cm}
\begin{proof}
The case $d_{1}\neq d_{2}$ is a direct application of theorem \ref{main} for $n=2$ and $f(x)=\log(1+x)$. For the case $d_{1}=d_{2}=d$ then both the top and the bottom vanish and we have to take the limit of $d_{1}=d+\epsilon$ and $d_{2}=d$ as $\epsilon\to 0$. More precisely,
$$
\lim_{\epsilon\to 0}{\frac{f_{0}(d+\epsilon)-f_{0}(d)}{\epsilon}}=\int_{0}^{\infty}{e^{-t}\Big[t\log(1+td)+\frac{t^2d}{1+td}\Big]dt}
$$
and 
$$
\lim_{\epsilon\to 0}{\frac{(d+\epsilon)f_{1}(d)-df_{1}(d+\epsilon)}{\epsilon}} = \int_{0}^{\infty}{e^{-t}\Big[(1+d)\log(1+td)-\frac{td}{1+td}\Big]\,dt}.
$$
Putting all the pieces together we finish the proof.
\end{proof}

\noindent Analogously, we can compute explicitly the moments for the two dimensional case.

\begin{theorem}
Let ${\bf A}$ be an Hermitian $2\times 2$ matrix with eigenvalues $d_{1}$ and $d_{2}$ and let $m\geq 1$. If $d_1\neq d_2$ then 
$$
\int_{M_{2}}{\mathrm{Tr}\Big(({\bf H^{*}AH})^m\Big)\,d\nu({\bf H})} = m!\Bigg( (m+1)\frac{d_{1}^{m+1}-d_{2}^{m+1}}{d_1-d_2}+\frac{d_{1}d_{2}^{m}-d_{2}d_{1}^{m}}{d_{1}-d_{2}}\Bigg).
$$
If $d_1=d_2=d$ then 
$$
\int_{M_{2}}{\mathrm{Tr}\Big(({\bf H^{*}AH})^m\Big)\,d\nu({\bf H})}=m!\,(m^2+m+2)d^{m}.
$$
\end{theorem}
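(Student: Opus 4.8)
The plan is to specialize the polynomial case of Theorem~\ref{main} to $n=2$ with $f(x)=x^{m}$. This is legitimate even though $\mathbf{A}$ is only assumed Hermitian: the polynomial part of the proof of Theorem~\ref{main} invokes only Theorem~\ref{teo_mac} and identity~(\ref{traceschur}), both valid for arbitrary Hermitian matrices, together with the determinantal formula for $s_{\lambda}(\mathbf{D})$, which holds for any pairwise distinct $d_i$; alternatively, by unitary invariance of $\nu$ both sides are polynomials in $(d_1,d_2)$ that agree on the open set of positive pairs and hence everywhere. So first I would assemble the $n=2$ ingredients: $\Delta(\mathbf{D})$ is the $2\times2$ Vandermonde matrix with rows $(d_1,d_2)$ and $(1,1)$, so $\det(\Delta(\mathbf{D}))=d_1-d_2$, and the exponent $n-(k+1)$ equals $1$ for $k=0$ and $0$ for $k=1$. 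From $\int_{0}^{\infty}e^{-t}t^{p}\,dt=p!$ one obtains, exactly as in the proof of Theorem~\ref{main},
\[
f_{k}(x)=\int_{0}^{\infty}e^{-t}(tx)^{1-k}(tx)^{m}\,dt=(m+1-k)!\,x^{m+1-k},
\]
so the replacement row $\frac{1}{(1-k)!}\{f_{k}(d_i)\}_{i}$ is $(m+1)!\,\{d_i^{m+1}\}_i$ when $k=0$ and $m!\,\{d_i^{m}\}_i$ when $k=1$.

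Next I would evaluate the two $2\times2$ determinants. Replacing the first row of $\Delta(\mathbf{D})$ gives $\det(\mathbf{T}_{0})=(m+1)!\,(d_1^{m+1}-d_2^{m+1})$, and replacing the second row gives $\det(\mathbf{T}_{1})=m!\,(d_1 d_2^{m}-d_2 d_1^{m})$. Summing, dividing by $\det(\Delta(\mathbf{D}))=d_1-d_2$, and factoring out $m!$ yields precisely
\[
m!\Bigl((m+1)\frac{d_1^{m+1}-d_2^{m+1}}{d_1-d_2}+\frac{d_1 d_2^{m}-d_2 d_1^{m}}{d_1-d_2}\Bigr),
\]
which is the asserted formula in the case $d_1\neq d_2$.

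For the degenerate case $d_1=d_2=d$ I would follow the device of the Remark after Theorem~\ref{main} and of Corollary~\ref{ex}. Both quotients above are in fact polynomials in $(d_1,d_2)$, namely $\frac{d_1^{m+1}-d_2^{m+1}}{d_1-d_2}=\sum_{j=0}^{m}d_1^{j}d_2^{m-j}$ and $\frac{d_1 d_2^{m}-d_2 d_1^{m}}{d_1-d_2}=-d_1 d_2\sum_{j=0}^{m-2}d_1^{j}d_2^{m-2-j}$ (the empty sum being $0$ when $m=1$), so the right-hand side extends continuously to the diagonal and its value there is obtained by plain substitution $d_1=d_2=d$: the first polynomial becomes $(m+1)d^{m}$ and the second becomes $-(m-1)d^{m}$. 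Hence the average equals $m!\bigl((m+1)^{2}+(1-m)\bigr)d^{m}=m!\,(m^{2}+m+2)\,d^{m}$. Equivalently, one can pass to the limit $\varepsilon\to0$ with $d_1=d+\varepsilon$, $d_2=d$ using L'Hôpital exactly as in the proof of Corollary~\ref{ex}.

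There is no genuine obstacle here; the only point requiring a word of care is the passage to the degenerate case, handled either by the polynomial-identity observation or by the explicit limit. As a consistency check I would record the $m=1$ instance: the formula gives $2(d_1+d_2)$ when $d_1\neq d_2$ and $4d$ when $d_1=d_2=d$, in both cases equal to $2\,\mathrm{Tr}(\mathbf{A})$, which agrees with the elementary identity $\E[\mathbf{H}^{*}\mathbf{A}\mathbf{H}]=\mathrm{Tr}(\mathbf{A})\,\mathbf{I}_{2}$.
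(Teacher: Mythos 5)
Your proposal is correct and follows exactly the route the paper intends: the theorem is stated with only the remark that it is obtained ``analogously'' to Corollary~\ref{ex}, i.e.\ by specializing Theorem~\ref{main} (in its polynomial case) to $n=2$, $f(x)=x^{m}$, computing the two $2\times 2$ determinants, and handling $d_{1}=d_{2}$ by a limit or polynomial-identity argument, which is precisely what you do. Your extra care about $\mathbf{A}$ being merely Hermitian and your $m=1$ consistency check are welcome but not a departure from the paper's approach.
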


\section{Conclusion}

\noindent Using results on random matrix theory and representation theory, in particular Schur polynomials, we prove a new formula for the average of functionals over the Gaussian ensemble. In particular, this gives another formula for the capacity of the MIMO Gaussian channel and the MMSE achieved by a linear receiver.

\end{document}